\documentclass[11pt]{article}
\usepackage{amsmath}
  \usepackage{paralist}
  \usepackage{graphics}
  \usepackage{epsfig}
  \usepackage{float}
  \usepackage{graphicx}
  \usepackage{abstract}
 \usepackage{epstopdf}
 \usepackage{amssymb}
 \usepackage{cite}
 \usepackage{mathrsfs}
 \usepackage{amsthm}
 \usepackage[colorlinks=true]{hyperref}
\hypersetup{urlcolor=blue, citecolor=red}

  \textheight=8.2 true in
   \textwidth=5.0 true in
    \topmargin 30pt
     \setcounter{page}{1}

\numberwithin{equation}{section}

\newtheorem{theorem}{Theorem}[section]

\newtheorem{physical conclusion}{Physical Conclusion}

\newtheorem{definition}[theorem]{Definition}
\newtheorem{remark}[theorem]{Remark}

\title{ Magneto-hydrodynamical  Model for Plasma}
\author{ Ruikuan Liu\thanks{Email:liuruikuan2008@163.com. Supported by NSFC(11401479)
} \ \  Jiayan Yang\thanks{Corresponding author:jiayan\_{}1985@163.com;} 
\\ \footnotesize $^{*,\dag}$Department of Mathematics, \footnotesize Sichuan University
 \footnotesize Chengdu, Sichuan 610064, China
 \\ \footnotesize $^\dag$Department of Mathematics and Information Technology,~Southwest Medical University \\ \footnotesize Chengdu,
Sichuan 646000, China}

\begin{document}
\date{}
\maketitle
\begin{abstract}
 Basing on the Newton's second law and the Maxwell equations for the electromagnetic fields, we  establish a new  3D incompressible magneto-hydrodynamics(MHD) equations for the motion of plasma under the standard Coulomb gauge. By using the Galerkin method, we prove a global weak solution for this 3D new model.
\begin{center}
\textbf{\normalsize keywords}
\end{center}
 Maxwell equations, Plasma, Coulomb gauge, Magneto-hydrodynamics, Galerkin method.
\end{abstract}

\section{Introduction}

It is well known that magneto-hydrodynamics is the study of the dynamics for electrically conducting fluids which are frequently generated in nature and industry, for example, the sun, beneath the Earth's mantle, plasma, liquid metals, and so on. We refer to Bittencourt's monograph \cite{B} for the basic background and to Temam's article \cite{T} for related mathematical issues. The  model of magneto-hydrodynamics receives an increasing attentions from many scientists. Such as, S. Chandrasekhar \cite{C} first established magneto-hydrodynamic (MHD) equations. Afterwards, the extended magnetohydrodynamics (XMHD) model has been researched in high energy density (HED) plasma systems, see \cite{S},\cite{SL},\cite{YE} and their references.

For the MHD equations, Duvaut and Lions \cite{DL} proved the existence of global weak solutions in Leray energy space and the existence of classical solutions locally in time for smooth initial data. Until now, there have been  many studies on the problem of regularity of weak solutions for MHD equations. Xin \cite{HX} introduced the definition of interior suitable weak solutions and obtained partial regularity theorems. Kang and Lee \cite{KL} and Wang and Zhang \cite{WZ} gave other regularity criteria under the Ladyzhenskaya-Prodi-Serrin type conditions. Kang and Kim \cite{KK0},\cite{KK1} considered suitable weak solutions in the half space and gave a boundary regularity criteria.

Recently, the Hall magneto-hydrodynamic (Hall-MHD) model was established by  A. Marion, D. Pierre etc in \cite{MP}. The Hall-MHD model receives an increasing attentions
from plasma physicists. It is believed to be the key for understanding the problem of
magnetic reconnection. Indeed, space plasma observations provide strong evidence for the
existence of frequent and fast changes in the topology of magnetic field lines, associating to important events such as solar flares \cite{F}. For the existence of global weak and regular solutions of the (Hall-MHD) equations, we refer to \cite{FO},\cite{HG},\cite{LM}. For the generalized magnetohydrodynamics
(MHD) and Navier-Stokes systems, we refer to \cite{AS},\cite{F},\cite{LM},\cite{MB},\cite{S},\cite{SL},\cite{YA1},\cite{YA2}.

However, the corresponding models in above papers  are obtain by using  certain approximations
and assumptions. In this paper, basing on some basic physics principles,  a new 3D  incompressible magneto-hydrodynamic model for plasma shall be established without any assumptions. It is in particular that the model describes the un-static electronic field.  Moreover, the existence of the global weak solution of this 3D model is obtained by the
Galerkin method.

\vskip 3mm
This paper is organized as follows. In Section 2 we introduce the new MHD model and also show the new MHD model is compatible with Maxwell equations. Meanwhile, we will analysis the characters of our model and the classical MHD model. In Section 3, we  give some notations, definitions and also demonstrate that a global weak solution of the three dimensions incompressible MHD equations.

\section{A new MHD Model}
\subsection{ A new model}
The motion of plasma obeys the Maxwell  equations for electromagnetic fields
\begin{eqnarray}
\begin{aligned}
\label{21} \text{curl}\, \mathbf{E}=-\frac{\partial \mathbf{H}}{\partial t},
\end{aligned}
\\\begin{aligned}
\label{22} \text{div}\mathbf{H}=0,
\end{aligned}
\\\begin{aligned}
\label{23} \text{curl}\,\mathbf{H}=\mu_0(\mathbf{J}+\epsilon_0\frac{\partial\mathbf{E}}{\partial t}),
\end{aligned}
\\\begin{aligned}
\label{24} \text{div}\, \mathbf{E}=\frac{\rho}{\epsilon_0},
\end{aligned}
\end{eqnarray}
where $\rho,\mathbf{E},\mathbf{H},\mathbf{J},\epsilon_0$ and $\mu_0$ denote, respectively, the total charge density of the plasma, the electric field, the magnetic field, the current density, the electric permittivity, and the magnetic
permeability of free space.

Let $A_0$ be a scalar potential and $\mathbf{A}=(A_1,A_2,A_3)$ be a magnetic potential. Basing on the
mathematical theory of vector fields, we easily get the following equations from (\ref{21})-(\ref{24})
\begin{eqnarray}
\begin{aligned}
\label{25} \mathbf{H}=\text{rot}\mathbf{A},
\end{aligned}
\\\begin{aligned}
\label{26} \mathbf{E}=-\nabla \mathbf{A_0}-\frac{\partial \mathbf{A}}{\partial t},
\end{aligned}
\\\begin{aligned}
\label{27} \text{rot}\, \mathbf{H}=\mu_0(\mathbf{J}+\epsilon_0\frac{\partial\mathbf{E}}{\partial t}),
\end{aligned}
\\\begin{aligned}\label{28}
\text{div}\, \mathbf{E}=\frac{\rho}{\epsilon_0}.
\end{aligned}
\end{eqnarray}
Obviously, the equations (\ref{21})-(\ref{24}) are equivalent to  (\ref{25})-(\ref{28}).

In the following we will deduce new governing MHD equations according to
the classical Newton's second law and  the  Maxwell  equations (\ref{25})-(\ref{28}).

Firstly, from the Newton's second law, the motion of plasma are governed by the following Navier-Stokes equations
\begin{equation}\label{29}
  \frac{\partial \mathbf{u}}{\partial t}+(\mathbf{u}\cdot\nabla)\mathbf{u}=\nu\Delta \mathbf{u}-\frac{1}{\rho_0}\nabla p+\frac{1}{\rho_0}\mathbf{J}\times\mathbf{H}+f,
\end{equation}
where $\mathbf{u}=(u_1,u_2,u_3)$ is the velocity field, $p$ is the pressure, $\mathbf{J}\times \mathbf{H}$ is the Lorentz force, $\mathbf{J}$ is current density, $f$ is the external force, $\rho_0$ is the mass density and $\nu$ is the dynamic viscosity.

It is known that the plasma current density is  in direct proportion to the speed of plasma motion,
\begin{equation}\label{210}
  \mathbf{J}=\rho_e \mathbf{u},
\end{equation}
where the constant $\rho_e$ is  the equivalent charge density, which is a constant.

Together (\ref{29}) with (\ref{210}), we get
\begin{equation}\label{211}
  \frac{\partial \mathbf{u}}{\partial t}+(\mathbf{u}\cdot\nabla)\mathbf{u}=\nu\Delta\mathbf{u}-\frac{1}{\rho_0}\nabla p+\frac{\rho_e}{\rho_0}\mathbf{u}\times \mathbf{H}+f.
\end{equation}

Secondly, we shall obtain an equation of the motion for the magnetic
 field.
  From (\ref{27}),
    we have
    \begin{equation}\label{212}
  \epsilon_0\mu_0\frac{\partial \mathbf{E}}{\partial t}=\text{rot}\,\mathbf{H}-\mu_0 \mathbf{J}.
\end{equation}
Then, combining (\ref{26}) and (\ref{212}),  we obtain
\begin{equation}\label{213}
  \frac{\partial^2\mathbf{A}}{\partial t^2}=-\nabla \Phi-\frac{1}{\epsilon_0\mu_0}\text{rot}^2\mathbf{A}+\frac{1}{\epsilon_0}\mathbf{J},
\end{equation}
where $\Phi=\frac{\partial A_0}{\partial t}$.

As the matter of fact,  based on the Lorentz invariance of electromagnetism theory in \cite{MW}, the equations (\ref{25})-(\ref{28}) can be equivalently written as follows
\begin{equation}\label{214}
  \partial^\mu \mathbf{F}_{\mu\nu}=\mathbf{J}_\nu(\nu=0,1,2,3),
\end{equation}
 where $\mathbf{F}_{\mu\nu}=\frac{\partial \mathbf{A}_\nu}{\partial x^\mu}-\frac{\partial \mathbf{A}_\mu}{\partial x^\nu}$, $\mathbf{A}_\nu=(A_0,\mathbf{A})$ is the 4D electromagnetic potential,  $\mathbf{J}_\nu=(-c\rho,\mathbf{J})$ is the 4D current density. By the 4D current conservation law
 \begin{equation}\label{215}
  \partial^\mu \mathbf{J}_\mu=0
\end{equation}
 and the identity
 \begin{equation}\label{216}
  \partial^\nu\partial^\mu \mathbf{F}_{\mu\nu}=0,
\end{equation}
 we have that
 \begin{equation}\label{217}
  \partial^\nu(\partial^\mu \mathbf{F}_{\mu\nu}-\mathbf{J}_\nu)=0,
\end{equation}
which implies that the number of independent equations in (\ref{214}) are three and the number of unknown
functions of (\ref{214}) are four. Hence, we need to supplement a equation
, physically called the gauge-fixing equation, to enure a physical
solution. Basing on the physical fact, we take the Coulomb gauge
  \begin{equation}\label{218}
  \text{div}\mathbf{A}=0.
\end{equation}
By (\ref{218}), we obtain
 $$\text{rot}^2\mathbf{A}=-\Delta \mathbf{A}.$$
 Therefore,  (\ref{213}) can be rewritten  as follows
 \begin{equation}\label{219}
  \frac{\partial^2\mathbf{A}}{\partial t^2}=\frac{1}{\epsilon_0\mu_0}\Delta\mathbf{A}+\frac{1}{\epsilon_0}\mathbf{J}-\nabla \Phi.
\end{equation}
Finally, from (\ref{29}), (\ref{210}) and (\ref{219}), a  new 3D incompressible MHD system
is given by:
\begin{eqnarray}
\label{220} \left\{
   \begin{aligned}
 & \frac{\partial \mathbf{u}}{\partial t}+(\mathbf{u}\cdot\nabla)\mathbf{u}=\nu\Delta\mathbf{u}-\frac{1}{\rho_0}\nabla p+\frac{\rho_e}{\rho_0}\mathbf{u}\times \text{rot}\mathbf{A}+f,
 \\&\frac{\partial^2\mathbf{A}}{\partial t^2}+\nabla \Phi=\frac{1}{\epsilon_0\mu_0}\Delta\mathbf{A}+\frac{\rho_e}{\epsilon_0}\mathbf{u},
 \\&  \nabla\cdot\mathbf{u}=0,
 \\& \nabla\cdot\mathbf{A}=0,
\end{aligned}
\right.
\end{eqnarray}
where $\mathbf{u}=(u_1,u_2,u_3)$, $\mathbf{A}=(A_1,A_2,A_3)$, $p$, $f$,$\Phi$,$\nu$ $\rho_0$, $\rho_e$, $\epsilon_0$ and $\mu_0$ denote, respectively, the velocity field, the magnetic potential, the pressure, the  external force, the  magnetic pressure, the dynamic viscosity, the mass density, the equivalent charge density, the electric permittivity  and the magnetic permeability of free space.

\begin{remark}
It is obvious that we deduce the new incompressible MHD equations basing on  the basic principles without any assumptions. Naturally, this new MHD equations exactly discrible the motion of plasma. It is worth pointing out $\Phi=\frac{\partial A_0}{\partial t}$ represents the magnetic pressure. The Coulomb gauge (\ref{218}) guarantees  a physical solution.
\end{remark}

\begin{remark}
When the dimension  $N$ of the space is $2$, the operator curl
and $\widetilde{curl}$ are defined as follows
\begin{eqnarray*}
\begin{aligned}
  \text{curl} \mathbf{u}&=\frac{\partial u_2}{\partial x_1}-\frac{\partial u_1}{\partial x_2} \ \  \ \text{for\ \ each \ \ vector\ \ function}\ \ \mathbf{u}=(u_1,u_2),
  \\ \widetilde{curl}\Psi&=(\frac{\partial\Psi}{\partial x_1},-\frac{\partial \Psi}{\partial x_2})\ \  \ \text{for\ \ any \ \ scalar\ \ function}\ \ \Psi.
\end{aligned}
\end{eqnarray*}
It is easy to observe that the two-dimension formula is given by
$$
 \widetilde{\text{curl}}\text{curl}~\mathbf{u}=\text{grad}\ \text{div}\mathbf{u}-\Delta \mathbf{u}.
$$
Eventually, our new model is also true for the 2D MHD.
\end{remark}

\subsection{Compatible with Maxwell equation}
It is known that the second equation of (\ref{220}):
\begin{equation}\label{221}
  \frac{\partial^2\mathbf{A}}{\partial t^2}=\frac{1}{\epsilon_0\mu_0}\Delta\mathbf{A}+
  \frac{\rho_e}{\epsilon_0}\mathbf{u}-\nabla \Phi
\end{equation}
is derived from the equations (\ref{25})-(\ref{27}). The gauge-fixing equation (\ref{218}) is the Coulomb gauge.

Now, we will show that the model (\ref{220}) is compatible with the Maxwell equations
(\ref{25})-(\ref{28})(or equivalently (\ref{21})-(\ref{24})).
Namely, we need to prove that (\ref{221}) and (\ref{218}) are compatible with the equation (\ref{24}).

 To divergent  both sides of (\ref{221}), and by (\ref{218}), we get
\begin{equation}\label{222}
  \frac{\partial}{\partial t}(\Delta A_0)=0.
\end{equation}
On the other hand, by (\ref{28}),  we obtain
\begin{equation}\label{223}
  \frac{\partial}{\partial t}(\text{div} \mathbf{E})=0,
\end{equation}
because $\rho$ is a constant in the plasma. By (\ref{26}) and (\ref{218}), we deduce that
\begin{equation}\label{224}
  \frac{\partial}{\partial t}(\text{div} \mathbf{E})=-\frac{\partial}{\partial t}(\Delta A_0)=0,
\end{equation}
which implies (\ref{221}) and (\ref{218}) are compatible with the equation (\ref{24}).

\subsection{The classical magneto-hydrodynamics (MHD) equations}
The classical magneto-hydrodynamics(MHD) equations was established by\\ S. Chandrasekhar in \cite{C}. He  took the following assumption
\begin{equation}\label{225}
\frac{\partial \mathbf{E}}{\partial t}=0,
\end{equation}
then the equations (\ref{23}) can be rewritten as follows
\begin{equation}\label{226}
\text{curl}\,\mathbf{H}=\mu_0\mathbf{J}.
\end{equation}
 The current density $\mathbf{J}$ is described by Ohm's law
\begin{equation}\label{227}
\mathbf{J}=\sigma(\mathbf{E}+\mu\mathbf{u}\times\mathbf{H}),
\end{equation}
where $\sigma$ is the coefficient of electrical conductivity.

According to (\ref{227}), we get
\begin{equation}\label{228}
\mathbf{E}=\frac{1}{\sigma}\mathbf{J}-\mu\mathbf{u}\times\mathbf{H}.
\end{equation}
It follows that from (\ref{226}) and (\ref{228})
\begin{equation}\label{229}
\mathbf{E}=\frac{1}{\mu_0\sigma}\text{rot}\,\mathbf{H}-\mu\mathbf{u}\times\mathbf{H}.
\end{equation}
Inserting (\ref{229}) for $\mathbf{E}$ into equation (\ref{21}), we have
\begin{equation}\label{230}
\frac{\partial\mathbf{H}}{\partial t}-\text{rot}\,\mathbf{u}\times\mathbf{H}=
-\text{rot}\,(\frac{1}{\sigma\mu_o\mu}\text{rot}\mathbf{H}).
\end{equation}
 For simplicity, setting $\mu=\sigma\mu_o\mu$,   then (\ref{230}) takes the form
\begin{equation}\label{231}
\frac{\partial \mathbf{H}}{\partial t}+(\mathbf{u}\cdot\nabla)\mathbf{H}=\mu\Delta\mathbf{H}
 +\mathbf{H}\cdot\nabla\mathbf{u}.
\end{equation}

By (\ref{226}), we deduce  $\mathbf{J}=\frac{1}{\mu_0}\text{rot}\,\mathbf{H}$, then Lorentz force $\textit{L}$ is given by
\begin{equation}\label{232}
  \textit{L}=\mathbf{J}\times\mathbf{H}=\frac{1}{\mu_0}\text{rot}\, \mathbf{H}\times\mathbf{H}.
\end{equation}
Similar to (\ref{29}), for the incompressible fluid, the equation of motion take the form
\begin{equation}\label{233}
  \frac{\partial \mathbf{u}}{\partial t}+(\mathbf{u}\cdot\nabla)\mathbf{u}=\nu\Delta\mathbf{u}-\frac{1}{\rho_0}\nabla p+\frac{1}{\mu_0}\text{rot}\, \mathbf{H}\times\mathbf{H}+f.
\end{equation}
where $\mathbf{u}=(u_1,u_2,u_3)$ is the velocity field, $p$ is the pressure, $f$ is the external force, $\rho_0$ is the mass density and $\nu$ is the dynamic viscosity.

Combining the motion of fluid and the the energy conservation,  and ignoring the displacement current,  the classical  MHD equations
are given by (\ref{231})-(\ref{233}) as follows
\begin{eqnarray}
\label{234} \left\{
   \begin{aligned}
 & \frac{\partial \mathbf{u}}{\partial t}+(\mathbf{u}\cdot\nabla)\mathbf{u}=\nu\Delta\mathbf{u}-\frac{1}{\rho_0}\nabla p+\frac{1}{\mu_0}\text{rot}\, \mathbf{H}\times\mathbf{H}+f,
 \\&\frac{\partial \mathbf{H}}{\partial t}+(\mathbf{u}\cdot\nabla)\mathbf{H}=\mu\Delta\mathbf{H}
 +\mathbf{H}\cdot\nabla\mathbf{u},
 \\&  \nabla\cdot\mathbf{u}=0,
 \\& \nabla\cdot\mathbf{H}=0,
\end{aligned}
\right.
\end{eqnarray}
where $\mathbf{u}$, $\mathbf{H}$, $p$, $\nu$, $\mu$ and $\rho_0$ denote, respectively, the velocity field, the magnetic field, the pressure,  the dynamic viscosity, the resistivity and the mass density.

\begin{remark} It is obvious to see that the magnetic equations in MHD (\ref{220})
 are different from that in (\ref{234}). In classical model ,  the first equation (\ref{25}) of the Maxwell equations is applied to get the  magnetic equation in (\ref{234}).
 In other word, the classical model (\ref{234}) holds true for the static electronic field.
 However,  we use the third equation (\ref{27}) of the Maxwell equations to get the corresponding  magnetic equation in (\ref{220}). Moreover, from the subsection $\mathbf{2.2}$, we also know the model(\ref{220}) is compatible
 with the  equation (\ref{28}) of the Maxwell equations.
\end{remark}


\section{Existence of the global weak solution of the MHD}
Now, we study the new 3D incompressible MHD equations with external force $f(x)$
\begin{eqnarray}
\label{31} \left\{
   \begin{aligned}
 & \frac{\partial \mathbf{u}}{\partial t}+(\mathbf{u}\cdot\nabla)\mathbf{u}=\nu\Delta\mathbf{u}-\frac{1}{\rho_0}\nabla p+\frac{\rho_e}{\rho_0}\mathbf{u}\times \text{rot}\mathbf{A}+f(x),
 \\&\frac{\partial^2\mathbf{A}}{\partial t^2}=\frac{1}{\epsilon_0\mu_0}\Delta\mathbf{A}+\frac{\rho_e}{\epsilon_0}\mathbf{u}-\nabla \Phi,
 \\&  \nabla\cdot\mathbf{u}=0,
 \\& \nabla\cdot\mathbf{A}=0,
\end{aligned}
\right.
\end{eqnarray}
in $\Omega_T=\Omega\times[0,T]\subset \mathbb{R}^3\times[0,\infty)$. Here $\mathbf{u}(x,t)=(u_1,u_2,u_3)$ is the velocity field, $\mathbf{A}(x,t)=(A_1,A_2,A_3)$ is the magnetic potential, $p$ is the pressure, $\Phi=\frac{\partial A_0}{\partial t}$ is the magnetic pressure with the scalar electromagnetic potential $A_0$, $\nu_0$ is the dynamic viscosity, $\rho_0$ is the mass density, $\rho_e$ is the equivalent charge density, $\epsilon_0$ is the electric permittivity and $\mu_0$ is the magnetic permeability of free space. Hereafter, we only consider the  following boundary conditions
\begin{equation}\label{32}
  \mathbf{u}(t,x)=0, \ \ \ \mathbf{A}(t,x)=0,\ \ \ \text{on} \ \ \partial\Omega\times[0,T].
\end{equation}

The initial conditions are chosen  as follows
\begin{equation}\label{33}
  \mathbf{u}(0,x)=\phi(x),\ \ \mathbf{A}(0,x)=\psi(x),\ \ \mathbf{A}_t(0,x)=\eta(x).
\end{equation}

\subsection{Notations and definitions}
We give a few \emph{notations} and \emph{definitions} and then state our main result of this paper.
Let $\Omega\subset\mathbb{R}^3$ be a bounded domain. Let $H^\tau(\Omega)(\tau=1,2)$ be the usual Sobolev space on $\Omega$ with
the norm $||\cdot||_{H^\tau}$ and $L^2(\Omega)$ be the Hilbert space with the usual norm
$||\cdot||$. The space $H^1_0(\Omega)$  we mean that the completion of $C^\infty_0(\Omega)$ under the norm $||\cdot||_{H^1}$. If $\digamma$ is a Banach space, we denote by $L^p(0,T;\digamma)$ the Banach space of the $\digamma$-value functions defined
in the interval $(0,T)$ that are $L^p$-integrable.

We also consider the following spaces of divergence-free functions (see Temam \cite{ST})
\begin{eqnarray*}
\begin{aligned}
  X&=\{\mathbf{u}\in C^\infty_0(\Omega,\mathbb{R}^3)\  |\  \text{div}\mathbf{u}=0 \ \ \text{in} \  \Omega\},
  \\Y&= \text{the closure of}\  X \  \text{in}\  L^2(\Omega, \mathbb{R}^3)
  \\&=\{\mathbf{u}\in L^2(\Omega,\mathbb{R}^3) \ |\  \text{div}\mathbf{u}=0 \ \ \text{in} \  \Omega\},
  \\W&=\text{the closure of} \ X \  \text{in}\  H^1(\Omega, \mathbb{R}^3)
  \\&=\{\mathbf{u}\in H^1_0(\Omega,\mathbb{R}^3) \ |\  \text{div}\mathbf{u}=0 \ \ \text{in} \  \Omega\}.
\end{aligned}
\end{eqnarray*}
The space $L^2(\Omega, \mathbb{R}^3)$ has the Leray decomposition $L^2(\Omega, \mathbb{R}^3)=W\oplus W^\perp$, where
$$W^\perp=\{\mathbf{u}\in L^2(\Omega, \mathbb{R}^3)\ | \ \mathbf{u}=\nabla p,\,p\in H^1(\Omega)\}$$
(Leray decomposition).
Throughout the paper $P$ will denote the orthogonal projection from $L^2(\Omega, \mathbb{R}^3)$ into $W$. Then the operator $B:D(B)\hookrightarrow Y\rightarrow Y$
given by $B=-P\Delta$ with domain $D(B)=H^2(\Omega, \mathbb{R}^3)\cap W$ is called
the Stokes operator, which is positive definite and self-adjoint operator by the
relation
$$(B\omega,v)=(\nabla\omega,\nabla v),\ \text{for\ \ all\ \ } \omega\in D(B),\ v\in W.$$
The operator $B^{-1}$ is linear continuous from $Y$ into $D(B)$, and since the injection of $D(B)$ in  $Y$ is compact, $B^{-1}$ can be considered
as a compact operator in $Y$. As an operator in  $Y$, $B^{-1}$ is also self-adjoint. Hence it possesses a sequence of eigenfunctions $\{e_j\}^\infty_{j=1}$ which form an orthogonal basis of $Y$
$$Be_j=\lambda_je_j,\ \ \ e_j\in D(B), $$
and
$$0<\lambda_1\leq\lambda_2\leq\lambda_3\leq\cdots,\ \ \ \lambda_j\rightarrow\infty,\ \ \text{for}\ \ j\rightarrow\infty.$$

\begin{definition}
Suppose that $\phi,\eta\in Y$, $\psi\in W$. For  any  $T>0$, a vector function $(\mathbf{u}, \mathbf{A})$ is called a global weak solution of problem ({\ref{31}})-(\ref{33}) on $(0, T)\times\Omega$
 if it satisfies the following conditions:
\begin{enumerate}
  \item $\mathbf{u}\in L^2(0,T; W)\cap L^\infty(0,T; Y),$
  \item $\mathbf{A}\in L^\infty(0,T; W),\  \mathbf{A}_t\in L^\infty(0,T; Y),$
  \item For any function $\mathbf{v}\in X$, there hold
\begin{equation*}
 \begin{aligned}
\int_{\Omega}\mathbf{u}\cdot\mathbf{v}\text{d}x + \int_{0}^{t}\int_{\Omega}(\mathbf{u}\cdot\nabla)\mathbf{u}\cdot\mathbf{v}&+ \nu\nabla\mathbf{u}\cdot\nabla\mathbf{v}- \frac{\rho_e}{\rho_0}(\mathbf{u}\times \text{rot}\mathbf{A})\cdot\mathbf{v} \text{d}x\text{d}t\\
&=\int_{0}^{t}\int_{\Omega}f\cdot\mathbf{v}\text{d}x\text{d}t
+\int_{\Omega}\phi\cdot\mathbf{v}\text{d}x
  \end{aligned}
\end{equation*}
and
  \begin{equation*}
\int_{\Omega}\frac{\partial\mathbf{A}}{\partial t}\cdot\mathbf{v}\text{d}x + \int_{0}^{t}\int_{\Omega} \frac{1}{\epsilon_0\mu_0}\nabla\mathbf{u}\cdot\nabla\mathbf{v}
+\frac{\rho_e}{\epsilon_0}\mathbf{u}\cdot\mathbf{v} \text{d}x\text{d}t=\int_{\Omega}\eta\mathbf{v}\text{d}x.
\end{equation*}

\end{enumerate}
\end{definition}

\subsection{Existence theorem}
For the classical MHD equations, Duvaut and Lions \cite{DL} proved the existence of global weak solutions in Leray energy space and the existence of classical solutions locally in time for smooth initial data. Now, we state our main result as follows.

\begin{theorem}
Let the initial value $\phi,\eta\in Y$,~$\psi\in W$. If $f\in Y, \Phi\in L^2(0,T; H^1_0(\Omega))$,
then there exists a global weak solution for the problem (\ref{31})-(\ref{33}).
\end{theorem}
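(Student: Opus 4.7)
The plan is to carry out the Faedo--Galerkin construction based on the Stokes-operator basis $\{e_j\}_{j=1}^\infty$ introduced in the excerpt. For each $n$ I would seek approximate solutions of the form
$$\mathbf{u}_n(x,t)=\sum_{j=1}^n g^n_j(t)e_j(x),\qquad \mathbf{A}_n(x,t)=\sum_{j=1}^n h^n_j(t)e_j(x),$$
whose coefficients satisfy the projection of (\ref{31}) against each $e_k$, $k=1,\dots,n$, with initial data given by the $Y$-projections of $\phi$, $\psi$, $\eta$ onto $\mathrm{span}\{e_1,\dots,e_n\}$. This is a coupled ODE system, first-order in the $g^n_j$ and second-order in the $h^n_j$; local-in-time solvability is immediate from the Cauchy--Lipschitz theorem, and the a priori bounds below extend the solutions to $[0,T]$.

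The energy structure of (\ref{31}) is especially clean because both coupling/forcing terms are skew against the natural tests. Testing the first Galerkin equation with $\mathbf{u}_n$, the cross-product term vanishes pointwise since $(\mathbf{u}_n\times\mathrm{rot}\,\mathbf{A}_n)\cdot\mathbf{u}_n=0$, and the convective term drops out by the divergence-free property, leaving
$$\tfrac{1}{2}\tfrac{d}{dt}\|\mathbf{u}_n\|^2+\nu\|\nabla\mathbf{u}_n\|^2=\int_\Omega f\cdot\mathbf{u}_n\,dx.$$
Testing the second equation with $\mathbf{A}_{n,t}\in W$ and using $\Phi\in L^2(0,T;H^1_0(\Omega))$ to eliminate $\int\nabla\Phi\cdot\mathbf{A}_{n,t}$ by integration by parts, one gets
$$\tfrac{1}{2}\tfrac{d}{dt}\Bigl(\|\mathbf{A}_{n,t}\|^2+\tfrac{1}{\epsilon_0\mu_0}\|\nabla\mathbf{A}_n\|^2\Bigr)=\tfrac{\rho_e}{\epsilon_0}\int_\Omega\mathbf{u}_n\cdot\mathbf{A}_{n,t}\,dx.$$
Adding these identities, applying Young's inequality to the coupling term on the right and to $(f,\mathbf{u}_n)$, and invoking Gronwall, one obtains the uniform bounds $\mathbf{u}_n\in L^\infty(0,T;Y)\cap L^2(0,T;W)$, $\mathbf{A}_n\in L^\infty(0,T;W)$ and $\mathbf{A}_{n,t}\in L^\infty(0,T;Y)$, which match the regularity demanded in the definition of weak solution.

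Compactness is then set up by extracting weak-$\ast$ and weak subsequences in the above spaces. The difficulty is that the magnetic equation is of wave rather than parabolic type, so one cannot apply Aubin--Lions to obtain strong compactness of $\mathbf{A}_n$ or of $\mathrm{rot}\,\mathbf{A}_n$; strong convergence must therefore come entirely from $\mathbf{u}_n$. Using the first Galerkin identity I would bound $\partial_t\mathbf{u}_n$ in $L^{4/3}(0,T;W^{\ast})$, the binding contribution being $(\mathbf{u}_n\cdot\nabla)\mathbf{u}_n$ treated by the standard duality $|((\mathbf{u}_n\cdot\nabla)\mathbf{u}_n,\mathbf{v})|\le\|\mathbf{u}_n\|_{L^3}\|\nabla\mathbf{u}_n\|_{L^2}\|\mathbf{v}\|_{L^6}$ combined with $\mathbf{u}_n\in L^4(0,T;L^3(\Omega))$; the Lorentz term is even better, since $\|\mathbf{u}_n\times\mathrm{rot}\,\mathbf{A}_n\|_{L^{6/5}}\le\|\mathbf{u}_n\|_{L^3}\|\mathrm{rot}\,\mathbf{A}_n\|_{L^2}$ gives it in $L^4(0,T;H^{-1}(\Omega))$. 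The compact embedding $W\Subset Y$ and the Aubin--Lions lemma then deliver $\mathbf{u}_n\to\mathbf{u}$ strongly in $L^2(0,T;Y)$.

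The main obstacle --- passing to the limit in the cross-product $\mathbf{u}_n\times\mathrm{rot}\,\mathbf{A}_n$ against a smooth divergence-free test $\mathbf{v}\in X$ --- I would resolve by the triple-product rearrangement
$$\int_0^T\!\!\int_\Omega(\mathbf{u}_n\times\mathrm{rot}\,\mathbf{A}_n)\cdot\mathbf{v}\,dx\,dt=\int_0^T\!\!\int_\Omega\mathrm{rot}\,\mathbf{A}_n\cdot(\mathbf{v}\times\mathbf{u}_n)\,dx\,dt,$$
so that the strong $L^2(0,T;L^2)$ convergence of $\mathbf{v}\times\mathbf{u}_n$ meets the weak $L^2$ limit of $\mathrm{rot}\,\mathbf{A}_n$. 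The nonlinear convective term is passed to the limit in the usual Leray--Hopf way after integrating by parts against $\mathbf{v}$; the coupling $\tfrac{\rho_e}{\epsilon_0}\mathbf{u}_n$ appearing in the $\mathbf{A}$-equation is linear and passes by weak convergence alone; and the initial conditions are recovered by standard integration by parts in time in the Galerkin identities, completing the construction of the global weak solution.
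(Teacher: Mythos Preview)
Your argument is correct and follows the same Faedo--Galerkin scheme as the paper: identical approximate system on $\mathrm{span}\{e_1,\dots,e_n\}$, identical energy identities obtained by testing with $\mathbf{u}_n$ and $\mathbf{A}_{n,t}$ (with the same cancellations of the convective, pressure, Lorentz, and $\nabla\Phi$ terms), and the same Gronwall-based uniform bounds.

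The only substantive difference is in the compactness step. The paper does not estimate $\partial_t\mathbf{u}_n$ in a dual space and invoke Aubin--Lions; instead it bounds the time-increments of the scalar Galerkin coefficients $c_{ik}(t)$ directly from the weak formulation, applies Arzel\`a--Ascoli to each coefficient, and then upgrades to strong $L^2((0,T)\times\Omega)$ convergence of $\mathbf{u}^k$, $\mathbf{A}^k$, $\mathbf{A}^k_t$ by citing auxiliary lemmas. For the cross-product term the paper simply appeals to those strong limits, whereas your triple-product rearrangement $(\mathbf{u}_n\times\mathrm{rot}\,\mathbf{A}_n)\cdot\mathbf{v}=\mathrm{rot}\,\mathbf{A}_n\cdot(\mathbf{v}\times\mathbf{u}_n)$ cleanly separates a strongly convergent factor from a merely weakly convergent one and is more self-contained. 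Your remark that the wave-type $\mathbf{A}$-equation prevents strong compactness of $\mathrm{rot}\,\mathbf{A}_n$ is the right diagnosis (strong compactness of $\mathbf{A}_n$ itself in $L^2$ is still available from $\mathbf{A}_n\in L^\infty(0,T;W)$, $\mathbf{A}_{n,t}\in L^\infty(0,T;Y)$, but that is not what the nonlinearity needs). Both routes are standard and yield the same result; yours is the more transparent one here.
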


\begin{proof}
Based on the standard  Gerlinkin's method. Let $\{e_k\}^\infty_{k=1}$ be $W$ and $Y$ common orthogonal  basis.
We consider the finite dimensional subspaces $W_k=\text{span}\{e_1(x),e_2(x),...,e_k(x)\}$, $k\in \mathbb{N}$, the corresponding
orthogonal projections $P_k:W\rightarrow W_k$. The approximate solutions
\begin{eqnarray}\label{eq0}
\begin{aligned}
&\mathbf{u}^k(x,t)=\sum_{i=1}^{k}c_{ik}(t)e_{i}(x),
\\&\mathbf{A}^k(x,t)=\sum_{i=1}^{k}d_{ik}(t)e_{i}(x),
\end{aligned}
\end{eqnarray}
expanded in terms of eigenfunctions of Stokes operators. Then, the
coefficients $c_{ik}(t)$ and $d_{ik}(t)$ are found by requiring that $\mathbf{u}^k$ and
$\mathbf{A}^k$ satisfy the following equations:
\begin{align*}
   &\frac{\partial\mathbf{u}^k}{\partial t}+P_k(\mathbf{u}^k\cdot\nabla\mathbf{u}^k)
   =-\nu B\mathbf{u}^k +\frac{\rho_e}{\rho_0}P_k(\mathbf{u}^k\times \text{rot}\mathbf{A}^k)+P_kf,
\\
  & \frac{\partial^2\mathbf{A}^k}{\partial^2 t}=-\frac{1}{\epsilon_0\mu_0}B\mathbf{A}^k
  +\frac{\rho_e}{\epsilon_0}P_k(\mathbf{u})-P_k(\nabla\Phi),
  \\
   &\mathbf{u}^k(0)=P_k(\phi), \mathbf{A}^k(0)=P_k(\psi), \mathbf{A}^k_t(0)=P_k(\eta),
\end{align*}
which owns the weak form
\begin{align}
   \label{34}\nonumber&\int_{0}^{t}\int_{\Omega}\mathbf{u}^k_t\cdot\alpha
   +(\mathbf{u}^k\cdot\nabla\mathbf{u}^k)\cdot\alpha +\frac{1}{\rho_0}\nabla p\cdot\alpha\text{d}x\text{d}t
   \\&=\int_{0}^{t}\int_{\Omega}\nu\Delta\mathbf{u}^k\cdot\alpha
   +\frac{\rho_e}{\rho_0}(\mathbf{u}^k\times \text{rot}\mathbf{A}^k)\cdot\alpha +f\cdot\alpha \text{d}x\text{d}t, \ \    \ \ \forall \alpha\in W_k,
\\
\label{35}\nonumber & \int_{0}^{t}\int_{\Omega}\mathbf{A}^k_{tt}\cdot\beta
   +\nabla\Phi\cdot\beta \text{d}x\text{d}t
   \\&=\int_{0}^{t}\int_{\Omega}\frac{1}{\epsilon_0\mu_0}\Delta
   \mathbf{A}^k\cdot\beta+\frac{\rho_e}{\epsilon_0}\mathbf{u}^k\cdot\beta \text{d}x\text{d}t, \ \    \ \ \forall\beta\in W_k,
  \\\label{36}
  &\mathbf{u}^k(0)=\phi_k(x), \mathbf{A}^k(0)=\psi_k(x), \mathbf{A}^k_t(0)=\eta_k(x).
\end{align}
Setting $\alpha=\mathbf{u}^k$ in (\ref{34}).  Note that using integration by parts, we get
\begin{align*}
&\int_{\Omega}(\mathbf{u}^k\cdot\nabla\mathbf{u}^k)\cdot\mathbf{u}^k\text{d}x
=\frac{1}{2}\int_{\Omega}(\mathbf{u}^k\cdot\nabla)|\mathbf{u}^k|^2\text{d}x
=-\frac{1}{2}\int_{\Omega}(\nabla\cdot\mathbf{u}^k)|\mathbf{u}^k|^2\text{d}x=0,
\end{align*}
since $\nabla\cdot\mathbf{u}^k=0$. It is easy to see that $(\mathbf{u}^k\times \text{rot}\mathbf{A}^k)\cdot\mathbf{u}^k=0$. Hence, (\ref{34}) can be rewritten by
\begin{align}\label{37}
   \frac{1}{2}\int_{0}^{t}\frac{d}{\text{d}t}\int_{\Omega}|\mathbf{u}^k|^2 \text{d}x\text{d}t=-\nu\int_{0}^{t}\int_{\Omega}|\nabla\mathbf{u}^k|^2 \text{d}x\text{d}t+\int_{0}^{t}\int_{\Omega}f\mathbf{u}^k\text{d}x\text{d}t.
\end{align}
Taking $\beta=\mathbf{A}^k_t$ in (\ref{35}), we have
\begin{align}\label{38}
\frac{1}{2}\int_{0}^{t}\frac{d}{\text{d}t}\int_{\Omega}\big(|\mathbf{A}^k_t|^2 +\frac{1}{\epsilon_0\mu_0}
   |\nabla\mathbf{A}^k|^2\big)\text{d}x\text{d}t=\int_{0}^{t}\int_{\Omega}
   \frac{\rho_e}{\epsilon_0}\mathbf{u}^k\cdot\mathbf{A}^k_t \text{d}x\text{d}t,
\end{align}
since $\int_{\Omega}\nabla\Phi\cdot\mathbf{A}^k_t\text{d}x
=-\int_{\Omega}\Phi(\nabla\cdot\mathbf{A}^k_t)\text{d}x=0.$

Adding (\ref{37})
  and  (\ref{38}),  using the H$\ddot{o}$lder inequality, we obtain
\begin{align}
   \label{}\nonumber &||\mathbf{u}^k(t)||^2+||\mathbf{A}^k_t(t)||^2+2\int_{0}^{t}
   \int_{\Omega}\nu|\nabla\mathbf{u}^k|^2 \text{d}x\text{d}t +\int_{\Omega}\frac{1}{\epsilon_0\mu_0}|\nabla\mathbf{A}^k|^2 \text{d}x
\\
   \label{}\nonumber&\leq 2||\mathbf{u}^k(0)||^2+2||\mathbf{A}^k_t(0)||^2+\int_{\Omega}
   \frac{1}{\epsilon_0\mu_0}|\nabla\mathbf{A}^k(0)|^2 \text{d}x
  \\  \label{39} &
   +\bigg(\frac{\rho_e}{\epsilon_0}+2\bigg)\bigg(\int_{0}^{t}||\mathbf{u}^k(t)||^2\text{d}t
   +\int_{0}^{t}||\mathbf{A}^k_t(t)||^2\text{d}t\bigg)
 +2\int_{0}^{t}\int_{\Omega}f^2 \text{d}x\text{d}t.
\end{align}
Notice that
\begin{align}
\nonumber&||\mathbf{u}^k(0)||=||\phi^k||\leq||\phi||,\ \ ||\mathbf{A}^k_t(0)||=||\eta^k||\leq||\eta||,\\
\label{310}&||\nabla\mathbf{A}^k(0)||=||\nabla\psi^k||\leq||\nabla\phi||.
\end{align}
It follows that (\ref{39}) and (\ref{310}),  we have the following estimate by Gronwall inequality
\begin{align}
   \nonumber &||\mathbf{u}^k(t)||^2+||\mathbf{A}^k_t(t)||^2\leq 2(||\phi||+||\eta||+||\nabla\psi||+||f||)e^{(2+\rho_e/\epsilon_0)t},
\\
  \nonumber& ||\nabla\mathbf{A}^k(t)||^2\leq
  2(||\phi||+||\eta||+||\nabla\psi||+T||f||)e^{(2+\rho_e/\epsilon_0)t},
\\
  \label{311}&\int_{0}^{T}||\nabla\mathbf{u}^k||^2 \text{d}t \leq
  2(||\phi||+||\eta||+||\nabla\psi||+T||f||)e^{(2+\rho_e/\epsilon_0)t}.
\end{align}

Furthermore,  (\ref{311}) implies that the global existence in $t$ for approximations
$(\mathbf{u}^k,\mathbf{A}^k)$ and also that
\begin{align*}
&\{\mathbf{u}^k \}\ \text{is uniformly bounded in } L^2(0,T; W)\cap L^\infty(0,T; Y),\\
&\{\mathbf{A}^k\} \ \text{is uniformly bounded in } L^\infty(0,T; W),\\
&\{\mathbf{A}^k_t\} \ \text{is uniformly bounded in } L^\infty(0,T; Y).
\end{align*}
Therefore, we conclude that there exist $\mathbf{u}^0\in L^2(0,T; W)\cap L^\infty(0,T; Y)$, $\mathbf{A}^0\in L^2(0,T; V)\cap L^\infty(0,T; Y)$and $\mathbf{A}^0_{t}\in L^\infty(0,T; Y)$ and subsequence,which we still
denote by $\{\mathbf{u}^k \},\{\mathbf{A}^k\}$, $\{\mathbf{A}^k_t\} $ to simplify the notation, i.e.,
\begin{align}\label{500}
\left\{
\begin{aligned}
   & \mathbf{u}^k\rightharpoonup\mathbf{u}^0\  \text{weakly \ in} \ L^2(0,T; W) \ \text{and weak\  star\  in}\ L^\infty(0,T; Y),
\\
  & \mathbf{A}^k \rightharpoonup\mathbf{A}^0 \ \ \text{weak\  star\  in}\ W^{1,\infty}(0,T;Y)\cap L^\infty(0,T; W),
  \\
  & \mathbf{A}^k_t \rightharpoonup\mathbf{A}^0_{t}\ \text{ weak\  star\  in}\ L^\infty(0,T; Y),
\end{aligned}
\right.
\end{align}
for $0<T<\infty$.

Let $ \mathbf{u}^k \in L^2(0,T; W)\cap L^\infty(0,T; Y)$. We want to obtain a uniform bound
for $\frac{\text{d}\mathbf{u}^k}{\text{d}t}$.  For the equations
\begin{align}\label{eq}
\begin{aligned}
   &\frac{\text{d}\mathbf{u}^k}{\text{d}t}=-P_k(\mathbf{u}^k\cdot\nabla\mathbf{u}^k)
   -\nu B\mathbf{u}^k +\frac{\rho_e}{\rho_0}P_k(\mathbf{u}^k\times \text{rot}\mathbf{A}^k)+P_kf,
\end{aligned}
\end{align}
we need to show that each term on the right of (\ref{eq}) is uniform bounded.

For any $h(0<h<1)$ and $\mathbf{v}\in X$, it is easy to see that

\begin{align}
&\begin{aligned}
\bigg|\int_{t}^{t+h}\int_{\Omega}\nabla\mathbf{u}^k\cdot\nabla\mathbf{v}\text{d}x\text{d}t\bigg|
&\leq \bigg[\int_{0}^{T}\int_{\Omega} |\nabla\mathbf{u}^k|^2\text{d}x\text{d}t\bigg]^{\frac{1}{2}} \bigg[ \int_{t}^{t+h}\int_{\Omega} |\nabla\mathbf{v}|^2 \text{d}x\text{d}t\bigg]^{\frac{1}{2}}
\\ & \leq ||\nabla\mathbf{v}||_{L^2({\Omega},\mathbb{R}^3)}||\mathbf{u}^k||_{L^2(0,T;W)}h^{\frac{1}{2}},
\end{aligned}\\
&\begin{aligned}
\bigg|\int_{t}^{t+h}\int_{\Omega}(\mathbf{u}^k\cdot\nabla\mathbf{u}^k)\cdot\mathbf{v}
\text{d}x\text{d}t\bigg|
&\leq \sum\limits_{i,j}^{n}\int_{t}^{t+h}\int_{\Omega}\mathbf{u}^k_i\mathbf{u}^k_j\frac{\partial \mathbf{v}_j}{\partial x_i} \text{d}x\text{d}t\bigg|
\\ & \leq C||\mathbf{v}||_{C^1({\Omega},\mathbb{R}^3)}||\mathbf{u}^k||_{L^\infty(0,T;Y)}h,
\end{aligned}\\
&\begin{aligned}
\bigg|\int_{t}^{t+h}\int_{\Omega}(\mathbf{u}^k\times\text{rot}\mathbf{A}^k)
\cdot\mathbf{v}\text{d}x\text{d}t\bigg|
&\leq C\int_{t}^{t+h}\int_{\Omega} |\mathbf{u}^k||\nabla\mathbf{A}^k||\mathbf{v}| \text{d}x\text{d}t
\\ & \leq C||\mathbf{v}||_{C^1({\Omega},\mathbb{R}^3)}||\mathbf{u}^k||_{L^\infty(0,T;Y)}||\mathbf{A}^k||_{L^\infty(0,T;W)}h,
\end{aligned}\\
&\begin{aligned}
\bigg|\int_{t}^{t+h}\int_{\Omega}f\cdot\mathbf{v}\text{d}x\text{d}t\bigg|
&\leq ||f||_{L^2(\Omega)}||\mathbf{v}||_{L^2(\Omega,\mathbb{R}^3)}h,
\end{aligned}
\end{align}
Hence, from (\ref{eq0}) and (3.15)-(3.18), we have
\begin{equation*}
|c_{ik}(t+h)-c_{ik}(t)|\leq Ch^{\alpha}(0<\alpha\leq1),
\end{equation*}
where $C$ is independent on $k$. So, for any fixed $i$, ${c_{ik}}$ is uniformly bounded and equi-continuous in $t\in [0,T]$. According to  Arzela-Ascoli theorem, we have
\begin{equation}\label{eq3}
\lim\limits_{k\rightarrow\infty}\sup \limits_{0\leq t\leq T}\bigg[\int_{\Omega}(\mathbf{u}^k-\mathbf{u}^0)\mathbf{v}\text{d}x\bigg]^2=0, \ \ \ \text{for}\ \mathbf{v}\in X.
\end{equation}

Analogously, for any $v\in X$, we have
\begin{eqnarray}\label{50}
\begin{aligned}
\lim\limits_{k\rightarrow\infty}\int_{0}^{T}
\bigg[\int_{\Omega}(\mathbf{A}_t^k-\mathbf{A}^0_{t})\mathbf{v}\text{d}x\bigg]^2\text{d}t=0.\\
\end{aligned}
\end{eqnarray}
Thus, $\mathbf{u}^k\rightharpoonup\mathbf{u}^0$, $\mathbf{A}^k_t \rightharpoonup\mathbf{A}^0_{t}$ is uniformly.

From Lemma C.4.2 in \cite{MW0} and Theorem 5.2.1 in \cite{E1}, combining (\ref{500}) and (\ref{eq3})-(\ref{50}), we easily get \begin{align}\label{501}
\mathbf{u}^k\rightarrow\mathbf{u}^0,\  \mathbf{A}^k\rightarrow\mathbf{A}^0,\  \mathbf{A}^k_t\rightarrow\mathbf{A}^0_{t} \ \text{in}\ L^2((0,T)\times \Omega).
\end{align}

Note that for any $\mathbf{v} \in C^\infty(\Omega,\mathbb{R}^3)\cap W$, by (\ref{501}), we have
\begin{eqnarray}\label{502}
\begin{aligned}
\lim\limits_{k\rightarrow\infty}\int_{0}^{t}\int_{\Omega}(\mathbf{u}^k\cdot\nabla\mathbf{u}^k)\cdot\mathbf{v}
\text{d}x\text{d}t
&=-\lim\limits_{k\rightarrow\infty}\int_{0}^{t}\int_{\Omega}\sum\limits_{i,j}^{n}
\mathbf{u}^k_i\mathbf{u}^k_j\frac{\partial \mathbf{v}_i}{\partial x_j} \text{d}x\text{d}t
\\ &=-\int_{0}^{t}\int_{\Omega}
(\mathbf{u}_0\cdot\nabla\mathbf{v})\cdot \mathbf{u}_0 \text{d}x\text{d}t
\\ &=\int_{0}^{t}\int_{\Omega}
(\mathbf{u}_0\cdot\nabla\mathbf{u}_0)\cdot\mathbf{v}\text{d}x\text{d}t.
\end{aligned}
\end{eqnarray}
And from  Theorem 5.2.1 in \cite{E1},  we get
\begin{eqnarray*}
\begin{aligned}
\lim\limits_{k\rightarrow\infty}\int_{0}^{t}\int_{\Omega}
\mathbf{u}^k\times\text{rot}\mathbf{A}^k\cdot\mathbf{v}
\text{d}x\text{d}t
=\int_{0}^{t}\int_{\Omega}
\mathbf{u}_0\times\text{rot}\mathbf{A}_0\cdot\mathbf{v}
\text{d}x\text{d}t.
\end{aligned}
\end{eqnarray*}
So,   the problem (\ref{31})-(\ref{33}) exists a global weak solution.
\end{proof}

\begin {thebibliography}{90}

\bibitem{AS} G. Ahmadi, M. Shahinpoor, Universal stability of magneto-micropolar fluid motions.
Internat. J. Engrg. Sci. 12 (1974), 657-663.

\bibitem{B} J. A. Bittencourt, Fundamentals of plasma physics. Pergamon Press, Oxford, 1986.

\bibitem{C} S. Chandrasekhar,  Hydrodynamic and hydromagnetic stability. The International Series of Monographs on Physics Clarendon Press, Oxford, 1961.

\bibitem{DL} G. Duvaut, J. L. Lions, In$\acute{e}$quations en thermo$\acute{e}$lasticit$\acute{e}$ et magn$\acute{e}$tohy\\ -drodynamique, Arch. Ration. Mech. Anal. 46 (1972) 241-279.

\bibitem{E1}  L. C. Evans, Weak convergence methods for nonlinear partial differential equations. CBMS Regional Conference Series in Mathematics, 74. Published for the Conference Board of the Mathematical Sciences, Washington, DC; by the American Mathematical Society, Providence, RI, 1990.

\bibitem{F} T. G. Forbes, Magnetic reconnection in solar flares, Geophysical and astrophysical fluid dynamics 62 (1991) 15-36.

\bibitem{FO} J. Fan,  T. Ozawa, Regularity criteria for the density-dependent Hall-magnetohydrodynamics. Appl. Math. Lett. 36 (2014) 14-18.

\bibitem{HG} H. Homann, R. Grauer, Bifurcation analysis of magnetic reconnection in Hall-MHD systems, Physica D 208 (2005) 59-72.

\bibitem{HX}  C. He, Z. Xin, Partial regularity of suitable weak solutions to the incompressible magnetohydrodynamic equations, J. Funct. Anal. 227(1)(2005) 113-152.

\bibitem{KL}  K. Kang, J. Lee, Interior regularity criteria for suitable weak solutions of the magnetohydrodynamic equations, J.Differential Equations 247(8) (2009) 2310-2330.

\bibitem{KK0} K. Kang, J. M. Kim, Regularity criteria of the magnetohydrodynamic equations in bounded domains or a half space, J. Differential Equations 253(2) (2012) 764-794.

\bibitem{KK1} K. Kang, J. M. Kim, Boundary regularity criteria for suitable weak solutions of the magnetohydrodynamic equations, J. Funct. Anal. 266(1) (2014) 99-120.

\bibitem{LM} E. Liverts, M. Mond, The Hall instability in accelerated plasma channels, Phys.    Plasmas 11(1) (2004) 55-61.

\bibitem{MP} A. Marion, D. Pierre, F. Amic, Liu Jian-Guo, Kinetic formulation and global existence for the Hall-Magneto-hydrodynamics system. Kinet. Relat. Models 4 (2011) 901-918.

\bibitem{MW0} T. Ma, S. Wang. Phase transition dynamics. Springer-Verklag, 2013.

\bibitem{MW} T. Ma, S. Wang. Mathematical Principles of Theoretical Physics.  Science Press, Beijing, 2015.

\bibitem{MB} G. Mikaberidze,  V. I. Berezhiani, Standing electromagnetic solitons in degenerate relativistic plasmas, Phys. Lett. A  42(2015) 2730-2734.

\bibitem{S} D. D. Schnack, Lectures in Magnetohydrodynamics: with an appendix on extended MHD,  Springer, Berlin, 2009.

\bibitem{SL} U. Shumlak, J. Loverich, Approximate Riemann solver for the two-fluid plasma model, J. Comput. Phys. 187(2) (2003) 620-638.

\bibitem{ST} M. Sermange, R. Temam, Some mathematical questions related to the MHD equations, Comm. Pure Appl. Math. 36(5) (1983) 635-664.

\bibitem{T} R. Teman,  Navier-Stokes equations Providence RI: AMS, 2000.



\bibitem{WZ}  W. Wang, Z. Zhang, On the interior regularity criteria for suitable weak solutions of the magnetohydrodynamics equations, SIAM J. Math. Anal. 45(5) (2013) 2666-2677.

\bibitem{YA1}  K. Yamazaki, Global regularity of N-dimensional generalized MHD system with anisotropic dissipation and diffusion. Nonlinear Anal. 122 (2015), 176-191.

\bibitem{YA2}  K. Yamazaki, Remarks on the regularity criteria of generalized MHD and Navier-Stokes systems. J. Math. Phys. 54 (2013),  16 pp.

\bibitem{YE} Z. Ye, Regularity criteria and small data global existence to the generalized viscous Hall-magnetohydrodynamics. Comput. Math. Appl. 70 (2015), 2137-2154.

\end{thebibliography}

\end{document}